\theoremstyle{plain}
\newtheorem{THEOREM}{Theorem}[section]
\newtheorem{theorem}[THEOREM]{Theorem}
\newtheorem{lemma}[THEOREM]{Lemma}
\theoremstyle{definition}
\theoremstyle{remark}
\newcommand{\thm}[1]{Theorem~\ref{#1}}
\def \a {\alpha}
\def \g {\gamma}
\def \d {\delta}
\def \e {\varepsilon}
\def \n {\nabla}
\def \th {\theta}
\def \cL {\mathcal{L}}
\def \cM {\mathcal{M}}
\def \cP {\mathcal{P}}
\newcommand{\N}{\ensuremath{\mathbb{N}}}   
\newcommand{\Z}{\ensuremath{\mathbb{Z}}}   
\newcommand{\R}{\ensuremath{\mathbb{R}}}   
\newcommand{\T}{\ensuremath{\mathbb{T}}}   
\def \p {\partial}
\def \ra {\rightarrow}
\renewcommand{\geq}{\geqslant}
\renewcommand{\leq}{\leqslant}
\def \fmin {\phi_{min}}
\DeclareMathOperator{\tr}{Tr} %
\def \lowr {\underline{\rho}}
\def \uppr {\overline{\rho}}
\def \dx  {\, \mbox{d}x}
\def \dz  {\, \mbox{d}z}
\def \ds  {\, \mbox{d}s}
\def \dth  {\, \mbox{d}\th}
\def \ddt  {\frac{\mbox{d\,\,}}{\mbox{d}t}}
\begin{document}

\title[Global existence of nearly aligned flocks]{Global existence and stability of nearly aligned flocks}

\author{Roman Shvydkoy}
\address{Department of Mathematics, Statistics, and Computer Science, M/C 249\\
    University of Illinois, Chicago, IL 60607, USA}
\email{shvydkoy@uic.edu}

\date{\today}

\subjclass[2000]{92D25, 35Q35, 76N10}

\keywords{flocking, alignment, Cucker-Smale, fractional Laplacian}

\thanks{\textbf{Acknowledgment.} Research was supported in part by NSF grant DMS 1515705, and the College of LAS at UIC. The author thanks Eitan Tadmor for stimulating discussions}

\begin{abstract}
We study regularity of a hydrodynamic singular model of collective behavior introduced in \cite{ST1}.  In this note we address the question of global well-posedness in multi-dimensional settings. It is shown that any initial data $(u,\rho)$ with small velocity variations $|u(x) - u(y)| < \e$ relative to its higher order norms, gives rise to a unique global regular solution which aligns and flocks exponentially fast.  Moreover, we prove that the  limiting flocks are stable.
\end{abstract}

\maketitle

\section{Introduction}
In this note we study a hydrodynamic model of collective behavior given by
\begin{equation}\label{e:main}
\left\{
\begin{split}
\rho_t + \n \cdot (\rho u) & = 0, \\
u_t + u \cdot \n u &= [\cL_\phi, u](\rho),
\end{split} \right. \qquad (x,t)\in \T^n \times \R_+.
\end{equation}
where the  commutator $[\cL_\phi, u](\rho):=\cL_\phi(\rho u)-\cL_\phi(\rho) u$
involves a non-negative communication kernel $\phi$, and $\cL_\phi$ is given by 
\begin{equation}\label{e:L}
\cL_\phi(f):=\int_{\T^n} \phi(|x-y|)(f(y)-f(x))dy.
\end{equation}
This model represents a macroscopic description of an agent-based Cucker-Smale dynamics. The basic objective of such models is to explain emergence of flocking and alignment in a wide range of biological and social systems, see \cite{CCTT2016,CS2007a,MT2014,PS2012} and references therein.  The system is driven by a  self-organization process where agents exert influence on each other's momenta through the kernel $\phi$. The long time behavior is characterized by  alignment, i.e. convergence of velocities $u$ to a single value $\bar{u}$, and flocking, which in our context is defined by convergence of the density $\rho$ to a traveling wave $\rho_\infty(x - t \bar{u})$. A common deficiency of existing models is the use of long range connections expressed, for example, by  $\int_1^\infty \phi(r) dr =\infty$. Such connections, albeit necessary for the alignment, should be less dominant in a realistic system. In a new class of models  introduced by E. Tadmor and the author in \cite{ST3,ST1,ST2} and independently by T. Do, et al \cite{DKRT2017}, we proposed to use  the singular kernel of the fractional Laplacian given by 
\begin{equation}\label{e:kernel}
\phi_\a(x) := \sum_{k\in \Z^n} \frac{1}{|x+ 2\pi k|^{n+\a}}, \qquad 0<\a< 2.
\end{equation}
which puts more emphasis on local interactions.  Global well-posedness theory for these singular models has been developed only in 1D mainly due to presence of an additional conserved quantity 
\[
e = u_x + \cL_\phi \rho.
\]
It establishes control over higher order terms by means of a pointwise bound $|e| < C \rho$.  In multi-dimensional settings the corresponding quantity is given by 
\[
e = \n \cdot u + \cL_\phi \rho,
\]
and satisfies 
\begin{equation}\label{e:e}
e_t + \nabla \cdot(u e)  = (\n \cdot u)^2 -  \tr(\n u)^2 ,
\end{equation}
see \cite{HeT2017} and Section~\ref{s:proof} below for derivation. Lack of control on $e$ in this case is part of the reason why in multiple dimensions the model has no developed regularity theory. The two notable exceptions are the result of Ha, et al \cite{Ha2014} demonstrating global existence in the case of smooth communication kernel $\phi$ with small initial data in higher order Sobolev spaces, $\|u\|_{H^{s+1}} < \e_0$, where $\e_0$ depends on $\|\rho_0\|_{H^s}$; and  He and Tadmor, \cite{HeT2017}  with global existence in 2D under a smallness assumption only on the initial amplitude of $u$, spectral gap of the stretch tensor $\n u + \n^\top u$, and under the threshold condition $e\geq 0$ (also necessary in 1D for smooth kernels, \cite{CCTT2016}).

The goal of this note is to address  global existence of smooth solutions for  singular models \eqref{e:main} -- \eqref{e:kernel} in any dimension under periodic boundary conditions. We establish a small initial data result where smallness is expressed only in terms of the initial amplitude of the solution. To be precise, let us introduce some notation and terminology.  We define the amplitude by
\begin{equation}
A(t) = \sup_{x \in \T^n} |u(x,t) - \bar{u}|,
\end{equation}
where $\bar{u} = \cP/ \cM$, $\cP = \int_{\T^n} u\rho \dx$, $\cM = \int_{\T^n} \rho \dx$. Observe that the momentum $\cP$ and mass $\cM$ are conserved. 
Exactly the same argument as in \cite{ST1} applied to each component of $u$ proves the following a priori bound:
\begin{equation}\label{e:A}
A(t) \leq A_0 e^{- \fmin \cM t}, \quad  \fmin = \min_{x\in \T^n} \phi(x).
\end{equation}
Hence, global solutions automatically align exponentially fast. Note that $(\bar{u}, \bar{\rho})$, where $\bar{\rho} = \rho_\infty(x - t \bar{u})$ and $\bar{u}$ is constant, is a traveling wave solution to \eqref{e:main}. We call it a flocking state. As shown in \cite{ST3,ST2} any solution to \eqref{e:main} in 1D for any $0<\a<2$ converges exponentially fast to some flocking state in smooth regularity classes. 

We use $|\cdot|_X$ to denote standard metrics and $[\cdot]_s$ to denote the homogeneous metric of $\dot{W}^{s,\infty}$. We now state our main result.

\begin{theorem}\label{t:main}
Let $0 < \a < 2$.  There exists an $N\in \N$ such that for any sufficiently large $R>0$ any initial condition $(u_0,\rho_0) \in H^{m}(\T^n) \times H^{m-1+\a}(\T^n)$, $m\geq n+4$, satisfying
\begin{equation}
\begin{split}
     |\rho_0|_\infty, |\rho^{-1}_0|_\infty, [u_0]_3, [\rho_0]_3  & \leq R,\\
     A_0 & \leq \frac{1}{R^N},
\end{split}
\end{equation}
gives rise to a unique global solution in  class  $C([0,\infty): H^{m} \times H^{m-1+\a})$. Moreover, the solution converges to a flocking state exponentially fast at least in $C^1$:
\[
|\rho(t) - \bar{\rho}(t)|_{C^1} < C e^{-\d t}.
\]
\end{theorem}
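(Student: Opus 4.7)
The plan is to run a bootstrap/continuation argument in which the smallness of the amplitude $A(t)$, already known to decay exponentially via \eqref{e:A}, is traded (through interpolation) for smallness of derivatives of $u$ up to critical order. The momentum equation can be rewritten in the dissipative form
\[
u_t + u\cdot\n u = \int_{\T^n} \phi(|x-y|)\rho(y)(u(y)-u(x))\dy,
\]
which, when $\rho$ is bounded from below by some $c_\rho>0$, provides dissipation on $u$ of order $\a/2$ at high frequencies. First I would establish local-in-time existence in the stated class by standard energy methods, then open a bootstrap on a maximal interval $[0,T^*)$ with assumptions
(i) $\tfrac{1}{2R}\leq \rho\leq 2R$,
(ii) $\|u\|_{H^m}+\|\rho\|_{H^{m-1+\a}}\leq 2R$, and
(iii) $A(t)\leq A_0 e^{-\d t}$ with $\d=\fmin\cM$.
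The goal is to show each assumption is strict on $[0,T^*)$, hence $T^*=\infty$.

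Second, I would exploit the exponential decay of $A$. By Gagliardo--Nirenberg on $u-\bar u$,
\[
[u]_1 \lesssim A^{1-\th}\|u\|_{H^m}^{\th}\lesssim A^{1-\th}R^{\th},
\]
and analogous bounds hold for $[u]_2$ and $[u]_3$ with suitable $\th$'s. By requiring $A_0\leq R^{-N}$ with $N$ large enough (dictated by these interpolation exponents and by the compound $R$-dependence of the estimates below), one makes $\|\n u\|_\infty$ genuinely small on $[0,T^*)$. This at once controls the density bounds (i): along characteristics $\p_t\ln\rho+u\cdot\n\ln\rho=-\n\cdot u$, and $\int_0^\infty\|\n\cdot u\|_\infty\dt$ is small, so (i) propagates with room to spare. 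It also gives smallness of the ``stretching'' source $(\n\cdot u)^2-\tr(\n u)^2$ appearing in \eqref{e:e}, which will be used to tame the quantity $e$.

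Third, I would carry out the main energy estimate. Apply $\p^m$ to the velocity equation, symmetrize, and obtain from the weighted fractional kernel a positive dissipation
\[
\gtrsim c_\rho \|u\|_{\dot H^{m+\a/2}}^2.
\]
The troublesome terms are the transport commutator $[\p^m,u]\n u$, handled by the Kato--Ponce inequality and bounded by $\|\n u\|_\infty\|u\|_{H^m}$ (\emph{small} by the interpolation above), and the commutator between $\p^m$ and the weight $\rho$ in the dissipative term, which, since $\a/2<1$, can be absorbed into the dissipation using $\|\rho\|_{H^{m-1+\a}}\leq 2R$. For the density one combines the continuity equation with the identity $\cL_\phi\rho=e-\n\cdot u$ and uses the transport-type equation \eqref{e:e} for $e$ (whose right-hand side is quadratically small in $\n u$) to propagate $\|\rho\|_{H^{m-1+\a}}$. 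The combined inequality has the form
\[
\ddt\bigl(\|u\|_{H^m}^2+\|\rho\|_{H^{m-1+\a}}^2\bigr)\leq C(R)A(t)\bigl(\|u\|_{H^m}^2+\|\rho\|_{H^{m-1+\a}}^2\bigr)-c_\rho\,(\text{dissipation}),
\]
and since $\int_0^\infty A(t)\dt\lesssim A_0/\d\lesssim R^{-N+1}$ is small, Gronwall closes the bootstrap. Finally, convergence to a flocking state in $C^1$ follows from the uniform-in-time $H^m\times H^{m-1+\a}$ bound together with the exponential decay $|u-\bar u|_\infty\leq A_0 e^{-\d t}$, via an interpolation of $|u-\bar u|_{C^1}$ and a Cauchy-in-time argument for $\rho(x-\bar u t,t)$ using the continuity equation.

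\textbf{Main obstacle.} The hardest step is the high-order energy estimate with a \emph{density-weighted} fractional dissipation, where the weight $\rho$ itself sits at the top of the regularity scale we are propagating. Controlling the commutator between $\p^m$ and $\rho$ in the quadratic form of the dissipation---without losing the $c_\rho$ coercivity---is the real technical core, and it is here that the precise power $N$ in the smallness hypothesis $A_0\leq R^{-N}$ is determined, since every interpolation of $[u]_k$ and every $R$-factor picked up from $\|\rho\|_{H^{m-1+\a}}\leq 2R$ must be defeated by a power of $A_0$.
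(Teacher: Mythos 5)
Your proposal takes a genuinely different route from the paper, and that route has a gap that actually defeats the point of the theorem.

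The paper does not attempt a global energy estimate at the top Sobolev level. It proves local existence in $H^m\times H^{m-1+\a}$ together with a Beale--Kato--Majda regularity criterion: as long as $|\n u|_\infty$ (and hence $|e|_\infty$) stays bounded, the $H^m$-norms can grow at most exponentially and the solution extends. The entire global argument then lives at the intermediate H\"older level $[u]_{2+\g}$: one runs a \emph{breakthrough scenario} for the first time $t^*$ at which $[u(t^*)]_{2+\g}=R$, obtains smallness of $[u]_1,[u]_2$ at $t^*$ by interpolating against the exponentially decaying amplitude $A(t)$, derives bounds on $\rho$ at the level $[\rho]_{1+\a-\g}$ via the $e$-quantity, and then, testing the equation for $\d^3_h u$ pointwise at the extremal $(x,h)$, shows that a Constantin--Vicol style nonlinear lower bound on the dissipation overwhelms all the other terms, yielding $\p_t[u(t^*)]_{2+\g}^2<0$, a contradiction. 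Nothing in this argument requires a uniform bound on $\|u\|_{H^m}$.

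Your proposal instead runs a bootstrap with the assumption $\|u\|_{H^m}+\|\rho\|_{H^{m-1+\a}}\leq 2R$. This assumption is not initialized by the hypotheses of the theorem: the data are only required to satisfy $[u_0]_3,[\rho_0]_3,|\rho_0|_\infty,|\rho_0^{-1}|_\infty\leq R$ and $A_0\leq R^{-N}$, while $\|u_0\|_{H^m}$ and $\|\rho_0\|_{H^{m-1+\a}}$ are arbitrary (only assumed finite). If you repair the bootstrap to $\|u\|_{H^m}\leq 2\|u_0\|_{H^m}$, the interpolation $[u]_1\lesssim A^{1-\th}\|u\|_{H^m}^\th$ no longer yields smallness unless $A_0$ is taken small relative to $\|u_0\|_{H^m}$ — i.e., you would end up requiring a smallness condition measured against the highest-order Sobolev norm, which is exactly the weaker result of \cite{Ha2014} that the paper is explicitly trying to improve upon. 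The whole reason the paper descends to the $C^{2+\g}$ level and invokes BKM is that the bootstrap threshold $[u]_{2+\g}<R$ is then commensurate with the assumed bound $[u_0]_3\leq R$, so interpolation against $A$ produces smallness of $[u]_1,[u]_2$ with constants depending only on $R$, uniformly over all data with possibly huge $H^m$-norm.

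A secondary issue is the density-weight commutator you flag as the ``main obstacle.'' You are right that it is delicate, but you do not actually resolve it, and at the level $H^m\times H^{m-1+\a}$ it is not clear that it can be absorbed without loss: $m$ derivatives falling on $\rho\in H^{m-1+\a}$ overshoot its regularity. The paper never faces this problem because the H\"older-level estimate only needs $\rho$ in $W^{1+\a-\g,\infty}$, which is obtained from the $e$-equation well below the top of the scale. In short: the energy-method bootstrap at the top Sobolev level is the wrong vehicle for this theorem; the correct reduction is BKM to a H\"older norm of order just above the smallness hypotheses, followed by a nonlinear-maximum-principle breakthrough argument.
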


Finally we note that the argument of \thm{t:main} establishes  a uniform control on $C^2$-norms of $u$ and the distance between the initial density $\rho_0$ and its final distribution  $\rho_\infty$.  As a consequence we obtain a stability result for flocking states.  

\begin{theorem}\label{t:stab}
    Let $(\bar{u},\bar{\rho})$ be a flocking state, where $\bar{\rho}(x) = \rho_\infty(x - t \bar{u})$, and let $(u_0,r_0)$ be an initial data satisfying 
the conditions of \thm{t:main}.  Suppose $|u_0 - \bar{u}|_\infty + |r_0 - \rho_\infty|_\infty < \e$.  Then the solution will converge to another flock $r_\infty$ with $|r_\infty - \rho_\infty|_\infty < \e^\th$, where $\th\in (0,1)$ depends only on $\a$. 
\end{theorem}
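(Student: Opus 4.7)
The proof should rest on the observation, already hinted at in the paragraph preceding the statement, that the argument of \thm{t:main} yields not only a uniform $C^2$-bound on $u$ but also a \emph{quantitative} bound on $|r_0 - r_\infty|_\infty$ in terms of the initial amplitude $A_0$. My plan is to extract this quantitative bound and combine it with a triangle inequality.

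First I would control the new mean velocity. Let $\bar v = \cP/\cM$ be the limiting velocity of the perturbed solution; mass and momentum conservation give
\[
    \bar v - \bar u = \frac{1}{\cM}\int_{\T^n}(u_0 - \bar u) r_0 \dx,
\]
so $|\bar v - \bar u| \leq |u_0 - \bar u|_\infty < \e$. In particular the perturbed amplitude satisfies
\[
    A_0 = |u_0 - \bar v|_\infty \leq |u_0 - \bar u|_\infty + |\bar u - \bar v| < 2\e,
\]
and for $\e$ small enough the hypotheses of \thm{t:main} still hold, producing a global flock with some limiting profile $r_\infty$.

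Next I would quantify how far $r_0$ is from its own flock $r_\infty$. Passing to the comoving frame via $\tilde r(t,x) = r(t, x + t\bar v)$, $\tilde u(t,x) = u(t, x+t\bar v) - \bar v$, the continuity equation reads $\p_t \tilde r + \n\cdot(\tilde r \tilde u) = 0$. From \eqref{e:A} one has the decay $|\tilde u(t)|_\infty = A(t) \leq 2\e\, e^{-\fmin \cM t}$, and \thm{t:main} supplies uniform higher-order bounds on $(u,r)$. Landau--Kolmogorov interpolation between the decaying $L^\infty$-norm of $\tilde u$ and the uniform higher-order norm then yields
\[
    |\n \tilde u(t)|_\infty \leq C A_0^\th e^{-\th \fmin \cM t}
\]
for some $\th\in(0,1)$ whose value is dictated by the interpolation indices (and hence tied to $\a$ through the Sobolev space $H^{m-1+\a}$ governing the density). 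Since $|\tilde r|_\infty$ and $|\n \tilde r|_\infty$ are uniformly bounded, this makes $|\p_t \tilde r(t)|_\infty \leq C A_0^\th e^{-\th \fmin \cM t}$ integrable in time, and since $\tilde r(0) = r_0$ while $\tilde r(t) \to r_\infty$ in $L^\infty$, I conclude
\[
    |r_0 - r_\infty|_\infty \leq \int_0^\infty |\p_t \tilde r(s)|_\infty \ds \leq C A_0^\th \leq C \e^\th.
\]

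Finally a triangle inequality closes the argument: combining the hypothesis $|r_0 - \rho_\infty|_\infty < \e$ with the estimate above gives $|r_\infty - \rho_\infty|_\infty \leq C\e^\th + \e$, which for $\e$ small is dominated by $\e^{\th'}$ with any $\th' \in (0,\th)$. The delicate point, and the main obstacle, is the middle step: converting the pure $L^\infty$ decay of $\tilde u$ into an \emph{integrable-in-time} decay of $|\n \tilde u|_\infty$ without losing the uniform rate. This requires digging into the bootstrap used to prove \thm{t:main} to identify the correct higher-order norm propagated by the system, and it is the scaling of $\cL_{\phi_\a}$ against this norm that produces the $\a$-dependence of $\th$. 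Steps one and three are then essentially bookkeeping.
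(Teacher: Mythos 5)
Your proof is correct and follows the same core route as the paper: bound $|r_\infty - r_0|_\infty$ by integrating the decay of $|\p_t\tilde r|_\infty$ established in the flocking argument, then finish with the triangle inequality against $|r_0 - \rho_\infty|_\infty < \e$. The bookkeeping differs: the paper chooses the parameter $R$ of \thm{t:main} as a function of $\e$ by setting $\e = 1/R^N$, so the integrated bound $|r_\infty - r_0|_\infty \lesssim 1/R$ (coming from $|\p_t\tilde r|_\infty \lesssim R^{-2} e^{-c_0 t/R}$) becomes $\lesssim \e^{1/N}$, giving $\th \approx 1/N$. You instead keep $R$ fixed as a data-dependent constant, observe $A_0 < 2\e$ via the mean-velocity shift $\bar v - \bar u$ — a useful step the paper leaves implicit — and feed $A_0$ directly through the interpolation $[u]_1 \leq A^{(1+\g)/(2+\g)}[u]_{2+\g}^{1/(2+\g)}$ (this is exactly the paper's \eqref{e:u12}), obtaining the tighter exponent $\th \approx (1+\g)/(2+\g)$ after absorbing the $R$-dependent constants. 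Both routes are valid and rest on the same two ingredients, the interpolation estimate and the $\p_t\tilde r$ decay. One small slip: the $\a$-dependence of $\th$ comes through the H\"older exponent $\g$, which must be taken small depending on $\a$ in order to close the dissipation estimates in the proof of \thm{t:main}, not through the Sobolev index $H^{m-1+\a}$ governing the density as you suggest; this does not affect the correctness of your argument.
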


\section{Proof of the main result}\label{s:proof}

\noindent \textsc{Local existence and regularity criterion}. First, to see \eqref{e:e}, we take the divergence of the momentum equation, denoting $d = \n \cdot u$, 
\[
d_t+u\cdot \nabla d + \tr(\n u)^2 = \cL_\phi( \n \cdot (u \rho))  - u \cdot \n \cL_\phi \rho - d  \cL_\phi \rho.
\]
Replacing $\cL_\phi( \n \cdot (u \rho)) = - \cL_\phi  \rho_t$ and collecting the terms we obtain \eqref{e:e}.

 The proof of local existence in space $u,\rho \in H^m$ for $m\geq n+4$ carries over ad verbatim from 1D case, see  \cite{ST1}. Although the right hand side of \eqref{e:e} is not zero, it is quadratic in $\n u$.  This results in the same a priori bound on the $e$-quantity:
\begin{equation}\label{eq:leb} 
\p_t |e|_{H^{m-1}}^2 \leq C (|e|_{H^{m-1}}^2 + |u|_{H^{m}}^2)( |\n u|_\infty + |e|_\infty).
\end{equation}
The bound on $u$ is also the same as in \cite{ST1}. We still have the Riccati boun for $Y = |u|_{H^m} + |e|_{H^{m-1}} + |\rho|_2 \sim  |u|_{H^m} + |\rho|_{H^{m-1+\a}}$ :
\[
Y_t \leq C Y^2.
\]
Along with the local existence estimates comes a Beale-Kato-Mayda type regularity criterion:  as long as $|\n u|_\infty$ is bounded on an interval $[0,T]$ all the higher order norms remain bounded as well, and hence the solution can be extended beyond $T$. The only difference being the $\n u$-term in the e-equation, which under BKM condition remains bounded, and hence so is $|e|_\infty$, which closes the estimate \eqref{eq:leb}.

In order to control $|\n u|_\infty$ it is sufficient to establish control over a higher order H\"older metric. We choose to work with the $C^{2+\g}$-norm as opposed to, say, $C^{1+\g}$-norm for two technical reasons. First, it will be necessary to overcome the singularity of the kernel for values of $\a$ greater than $1$, for which  $C^{1+\g}$ is insufficient. Second, as a byproduct we establish control over  $|\n^2 u|_\infty$ as well, which will lead us to the proof of strong flocking as in \cite{ST2}. 
 
From now on we will fix an exponent $0<\g<1$ to be identified later but dependent only on $\a$.  The following notation will be used throughout:
\[
\begin{split}
\d_h u(x) &= u(x+h) - u(x), \quad \tau_z u(x)  = u(x+z) \\
\d^2_h u & = \d_h(\d_h u), \quad \d^3_h u = \d_h(\d_h(\d_h u)).
\end{split}
\]
We use the H\"older metric defined via third order finite differences 
\begin{equation}\label{e:holder}
[u]_{2+\g} = \sup_{x,h \in \T^n}  \frac{|\d^3_h u(x)|}{|h|^{2+\g}}.
\end{equation}
The equivalence of \eqref{e:holder} to the classical norm $[\n^2 u]_\g$ is a well known result in approximation theory, see \cite{Triebel}.

\medskip

\noindent \textsc{Breakthrough scenario}. We assume that we are given a local solution $(u,\rho) \in C([0,\infty): H^{m} \times H^{m-1+\a})$ satisfying the assumptions of the Theorem.  Note that in view of the smallness assumption on $A_0$, the norm $[u(t) ]_{2+\g}$ will remain smaller than $1$ at least for a short period of time. We thus study a possible critical time $t^*<T$ at which the solution  reaches size $R$ for the first time:
\begin{equation}\label{e:R}
    [u(t^*) ]_{2+\g} = R , \quad [u(t) ]_{2+\g} < R,  \quad t< t^*.
\end{equation}
A contradiction will be achieved if we show that $\p_t [u(t^*) ]_{2+\g} <0$. This would establish the bound $[u(t) ]_{2+\g} <R$ on the entire interval of existence, and hence extension to a global solution.

\medskip

\noindent \textsc{Preliminary estimates on $[0,t^*]$}. First we observe two simple bounds:
\begin{equation}\label{e:u12}
[u(t)]_1, [u(t)]_2 <  R^{-\frac{6}{\a}} e^{-\frac{c_0 t}{R}}, \text{ for all } t\leq t^*,
\end{equation} 
provided $R$ and $N$ are sufficiently large. Indeed, in view of \eqref{e:A} and $\cM \geq 1/R$,
\[
 [u]_1 \leq A^{\frac{1+\g}{2+\g}} [u]_{2+\g}^{\frac{1}{2+\g}} \leq R^{1-N/2} e^{-c_0 t / R} <R^{-\frac{6}{\a}} e^{-\frac{c_0 t}{R}},
 \]
and similarly,
\[
[u]_2 \leq A^{\frac{\g}{2+\g}} [u]_{2+\g}^{\frac{2}{2+\g}} < R^{1 - N  \frac{\g}{2+\g}} e^{-c_0 t/R} \leq R^{-\frac{6}{\a}} e^{-c_0 t/R}.
\]
Next, we provide bounds on the density. Let us denote $\lowr$ and $\uppr$ the minimum and maximum of $\rho$, respectively.  Denote $d = \n \cdot u$. The classical estimates imply
\[
\lowr_0 \exp\left\{-  \int_0^t |d(s)|_\infty \ds \right\} \leq \lowr(t),\quad  \uppr(t) \leq \uppr_0 \exp\left\{ \int_0^t |d(s)|_\infty \ds \right\} .
\]
By \eqref{e:u12}, $|d|_\infty \leq R^{-3} e^{-c_0 s / R}$. Thus,
\[
 \int_0^t |d(s)|_\infty \ds \leq c R^{-2} \leq \ln 2,
 \]
Hence, we obtain the estimates
\begin{equation}\label{e:rrRR}
\frac{1}{2R} \leq \lowr(t),\quad  \uppr(t) \leq 2R.
\end{equation}
To get similar bounds for higher order derivatives of $\rho$ we resort to the $e$-quantity. Note that the right hand side of the e-equation is bounded by 
\[
| (\n \cdot u)^2 -  \tr(\n u)^2 | \leq c [u]_1^2 \lesssim R^{-6 } e^{-c_0 t /R}.
\]
From \eqref{e:e} we thus obtain
\[
\ddt |e|_\infty \leq R^{-3} e^{-c_0 t / R} |e|_\infty + R^{-6} e^{-c_0 t /R}.
\]
Again, by Gr\"onwall, and using that $|e_0|_\infty <R$,
\begin{equation}\label{e:eR}
	|e|_\infty \leq 2R,
\end{equation}
The estimate for $\n e$ follows similar calculation. Differentiating and performing standard estimates we obtain
\[
\ddt [e]_1 \lesssim [u]_1 [e]_1 + [u]_2 |e|_\infty + c [u]_1 [u]_2 .
\]
Using \eqref{e:u12} we obtain
\[
\ddt [e]_1 \lesssim  R^{-3} e^{-c_0 t / R} [e]_1 + 2 R^{-2} e^{-c_0 t/R} + c  R^{-3} e^{-c_0 t/R}.
\]
Note that initially $[e_0]_1 \leq [u_0]_2 + [\rho_0]_3 < 2R$. So, by Gr\"onwall, 
\[
[e]_1 \leq 4R,
\]
and hence, for $\a\neq 1$, we obtain
\begin{equation}\label{e:r1a}
[\rho]_{1+\a} \leq 5R,
\end{equation}
while for $\a=1$,
\[
[\cL_1 \rho]_1 \leq 5R.
\]
The latter does not guarantee a bound in $W^{2,\infty}$, however it implies bounds in other border-line classes such as Zygmund or Besov $B^2_{\infty,\infty}$. It will be sufficient for what follows to reduce the exponent $2$ by $\g>0$, which will ultimately depend on $\a$ only, and quote the case $\a\geq 1$ as 
\begin{equation}\label{e:rR}
[\rho]_{1+\a - \gamma} \leq C_\a R.
\end{equation}

\medskip

\noindent \textsc{Nonlinear bound on dissipation}.  We establish another auxiliary bound on the dissipation term similar to the nonlinear maximum principle estimate of Constantin and Vicol \cite{CV2012}, see also \cite{CCotiV2016}.
Denote  
\[
D_\a f (x) = \int_{\R^n}  |f(x+z) - f(x)|^2  \frac{\dz}{|z|^{n+\a}}.
\]
\begin{lemma} There is an absolute constant $c_0>0$ such that
    \begin{equation}\label{e:D}
    D_\a   \d_h^3  u (x) \geq  c_0 \frac{|  \d_h^3  u (x) |^{2 + \a}}{ [u]_2^\a |h|^{3 \a}}.
    \end{equation}
\end{lemma}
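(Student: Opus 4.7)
Set $g(x):=\d^3_h u(x)$, fix $x_0\in\T^n$, and write $\mu:=|g(x_0)|$. The plan is to prove a Constantin--Vicol style nonlinear maximum principle adapted to $g$, exploiting the fact that $g$ is a third-order finite difference of $u$ rather than a generic function of prescribed $L^\infty$-size. As preliminary input I would first record two bounds that follow from $u\in C^2$: writing $g=\d^2_h(\d_h u)$ and using $\|\d^2_h f\|_\infty\le |h|^2[f]_2$ with $[\d_h u]_2\le 2[u]_2$ gives $\|g\|_{L^\infty}\le C[u]_2|h|^2$, and similarly $\n_x g=\d^3_h\n u=\d^2_h(\d_h\n u)$ together with $\|\d_h\n u\|_\infty\le |h|[u]_2$ and $\|\d^2_h f\|_\infty\le 4\|f\|_\infty$ yields the Lipschitz estimate $\|\n_x g\|_{L^\infty}\le C[u]_2|h|$.

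Next, for a radius $r>0$ to be chosen, I would truncate the kernel and apply the pointwise inequality $(a-b)^2\ge\tfrac12 a^2-b^2$ with $a=g(x_0)$, $b=g(x_0+z)$ to obtain
\[
D_\a g(x_0)\ge\int_{|z|>r}(g(x_0+z)-g(x_0))^2\frac{\dz}{|z|^{n+\a}}\ge\frac{\mu^2}{2}c_{n,\a}r^{-\a}-\int_{|z|>r}g(x_0+z)^2\frac{\dz}{|z|^{n+\a}}.
\]

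The crux of the argument is the correction term. The naive estimate $g(x_0+z)^2\le\|g\|_\infty^2\le C[u]_2^2|h|^4$ yields a negative lower bound as soon as $\mu\ll[u]_2|h|^2$, so I would need to do better. My proposed route is to exploit the third-difference structure of $g$ through the identity
\[
\int_{|z|>r}g(x_0+z)K(z)\,\dz=\d^3_h(u*K_{>r})(x_0),\qquad K_{>r}(z):=|z|^{-n-\a}\mathbf 1_{|z|>r},
\]
combined with a Cauchy--Schwarz step and finite-difference bounds on the smoothed field $u*K_{>r}$, whose $C^2$-regularity is inherited from $u$; this should recover an extra factor of $|h|$ over the trivial estimate. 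An equivalent strategy would be a Tchebyshev-type argument on the level set $\{y:|g(y)|\ge\mu/2\}$, whose measure inside small balls is controlled by the Lipschitz constant of the preceding paragraph.

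Finally I would balance the principal term $\tfrac12\mu^2 c_{n,\a}r^{-\a}$ against the refined correction by choosing $r_*\sim[u]_2|h|^3/\mu$; substitution then produces $D_\a g(x_0)\ge c_0\mu^{2+\a}/([u]_2|h|^3)^\a$, as claimed. The main obstacle is the refined bound of the previous paragraph: the bare Constantin--Vicol nonlinear maximum principle based on $\|g\|_\infty$ alone falls short of the target by a factor $|h|^\a$, and closing this gap demands honest use of the cancellation inherent in $\d^3_h$.
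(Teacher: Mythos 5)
Your overall strategy is the right one and matches the paper in spirit: truncate the dissipation at scale $r$, isolate the principal term $|g(x_0)|^2 r^{-\alpha}$, exploit the third-difference structure of $g=\delta_h^3 u$ to get a refined bound on the correction, and optimize $r\sim [u]_2|h|^3/\mu$. The convolution identity $\int_{|z|>r} g(x_0+z)K(z)\,dz=\delta_h^3(u*K_{>r})(x_0)$ is also exactly the right device — it is equivalent to the paper's trick of writing $\delta_h^3 u(x+z)$ as a triple parameter integral of $\nabla^3 u(h,h,h)$ and integrating by parts once in $z$, and it produces the bound $\lesssim [u]_2|h|^3 r^{-\alpha-1}$. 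The gap is that you split the square the wrong way. Using $(a-b)^2\ge \tfrac12 a^2-b^2$ trades the cross term for a \emph{quadratic} correction $\int_{|z|>r} g(x_0+z)^2\,K(z)\,dz$, and the cancellation you need lives in the \emph{linear} functional $\int g(x_0+z)K(z)\,dz$, not in $\int g(x_0+z)^2 K(z)\,dz$. Squaring destroys the oscillation: a naive Cauchy--Schwarz yields $\int g^2 K\le \|g\|_\infty\int |g|\,K$, and once you take absolute values the convolution identity no longer applies, so you are back to the trivial estimate $\|g\|_\infty^2 r^{-\alpha}\sim [u]_2^2|h|^4 r^{-\alpha}$, which dominates $\mu^2 r^{-\alpha}$ whenever $\mu\lesssim[u]_2|h|^2$ — i.e., always, as you yourself noted. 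The vaguely stated ``Cauchy--Schwarz step'' and the Tchebyshev/level-set alternative do not, as written, close this gap; the level set $\{|g|\ge\mu/2\}$ has no useful measure bound away from $x_0$.

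The fix is small but essential and is exactly what the paper does: keep the cross term, i.e.\ use $(a-b)^2\ge a^2-2ab$ (discard $b^2\ge 0$ rather than absorbing $2ab$ into $b^2$). Then the correction is literally $2\,\delta_h^3 u(x_0)\int_{|z|>r}\delta_h^3 u(x_0+z)K(z)\,dz$, to which your convolution identity applies directly: $|\delta_h^3(u*K_{>r})|\le |h|^3\|\nabla^2 u*\nabla K_{>r}\|_\infty\lesssim [u]_2\,|h|^3\, r^{-\alpha-1}$. Balancing $\mu^2 r^{-\alpha}$ against $\mu[u]_2|h|^3 r^{-\alpha-1}$ with $r\sim [u]_2|h|^3/\mu$ gives \eqref{e:D}. (One can also rescue the quadratic split by writing $g(x_0+z)^2 = g(x_0+z)\bigl(g(x_0+z)-g(x_0)\bigr)+g(x_0)g(x_0+z)$, applying Cauchy--Schwarz to the first piece and absorbing it into $D_\alpha g(x_0)$ at the cost of a constant, but this is a genuine extra step that your proposal does not supply, and the claim that it recovers ``an extra factor of $|h|$'' over the trivial bound is not the right accounting.)
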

\begin{proof}
    Let us fix a smooth cut-off function $\psi$, and fix an $r>0$. We obtain
    \[
    \begin{split}
    D_\a   \d_h^3  u (x) & \geq  \int |\d_z \d_h^3  u (x)|^2 \frac{1-\psi(z/r)}{|z|^{n+\a}} \dz \\ 
    & \geq  \int (|\d_h^3  u (x)|^2 - 2 \d_h^3  u (x)  \d_h^3  u (x+z)) \frac{1-\psi(z/r)}{|z|^{n+\a}} \dz\\
    &\geq | \d_h^3  u (x)|^2 \frac{1}{r^\a}  - 2  \d_h^3  u (x) \int \d_h^3  u (x+z) \frac{1-\psi(z/r)}{|z|^{n+\a}} \dz
    \end{split}
    \]
    Notice that  
    \[
    \d_h^3  u (x+z) =  \int_0^1\int_0^1\int_0^1 \n^3_z u(x+z+(\th_1+\th_2+\th_3)h)  (h,h,h)  \dth_1\dth_2 \dth_3.
    \]
Integrating by parts in $z$ once, and using the bound 
\[
\left| \n_z \frac{1-\psi(z/r)}{|z|^{n+\a}} \right| \leq \frac{c}{|z|^{n+\a+1}} \chi_{|z|>r},
\]
 we obtain
    \[
    \left| \int \d_h^3  u (x+z) \frac{1-\psi(z/r)}{|z|^{n+\a}} \dz \right| \leq C [u]_2 \frac{|h|^3}{r^{\a+1}}.
    \]
    We continue with the estimate:
    \[
    D_\a   \d_h^3  u (x) \geq | \d_h^3  u (x)|^2 \frac{1}{r^\a} - C [u]_2 |\d_h^3  u (x)| \frac{|h|^3}{r^{\a+1}}.
    \]
    Optimizing in $r$ yields the result.
\end{proof}

In view of the preliminary estimates we established and the assumption of the breakthrough scenario, we consequently obtain
\begin{equation}\label{e:diss}
\frac{1}{|h|^{4+ 2\g}}	   D_\a   \d_h^3  u (x) \geq \frac{R^{8+\a}}{|h|^{\a(1-\g)}}. 
\end{equation}

\medskip

\noindent \textsc{Main estimates.} With all ingredients at hand we are now ready to use the equation to make estimate on the derivative of $[u]_{2+\g}$.  Let  $(x,h)\in \T^n$ be a pair for which the supremum \eqref{e:holder} is attained.  We write the equation for the third order difference:
\begin{equation}
\p_t \d^3_h u +  \d^3_h ( u   \n u)  = \int_\R \d_h^3[\rho(\cdot +z)( u(\cdot +z) - u(\cdot))] \frac{\dz}{|z|^{n+\a}}. 
\end{equation}
Denote
\begin{equation}
\begin{split}
    B& = \d^3_h ( u   \n u), \\
    I &= \int_\R \d_h^3[\rho(\cdot +z)( u(\cdot +z) - u(\cdot))] \frac{\dz}{|z|^{n+\a}}.
\end{split}
\end{equation}
We will be testing the equation with $\d_h^3 u(x)/|h|^{4+2\g}$. To expand the bilinear terms we make use of the product formula
\[
\d^3_h(fg) = \d_h^3 f \tau_{3h} g + 3 \d_h^2 f \d_h \tau_{2h} g + 3 \d_h f \d_h^2 \tau_h g + f \d_h^3 g.
\]
For the B-term we obtain
\[
B = \d_h^3 u \tau_{3h} \n u + 3 \d_h^2 u \d_h \tau_{2h} \n u + 3 \d_h u \d_h^2 \tau_h \n u + u \n \d_h^3 u.
\]
Note that the last term vanishes due to criticality. Thus, we can estimate 
\[
\frac{1}{|h|^{2+\g}} |B| \leq [u]_{2+\g} [u]_1  + 3|h|^{1-\g}[u]_2^2 + 3 [u]_1 [u]_{2+\g} \lesssim [u]_{2+\g} [u]_1 +  |h|^{1-\g} [u]_2^2.
\]
Multiplying by another $[u]_{2+\g} = R$ and using \eqref{e:u12} we obtain
\begin{equation}\label{e:B}
\frac{[u]_{2+\g}}{|h|^{2+\g}} |B| \lesssim R^{-1} + R^{-5} < 1.
\end{equation}
We now turn to the I-term which contains dissipation. 
The integrand is given by $\d^3_h [ \tau_z \rho\, \d_z u]$. So, we expand similarly using commutativity $\d_h \d_z = \d_z \d_h$:
\begin{equation}
\begin{split}
\d^3_h [ \tau_z \rho\, \d_z u] = \d_h^3 \tau_z \rho\, \tau_{3h} \d_z u + 3 \d_h^2 \tau_z \rho\, \tau_{2h} \d_h \d_z u+ 3 \d_h \tau_z \rho\, \tau_h \d_h^2 \d_z u + \tau_z \rho\,  \d_z \d_h^3  u.
\end{split}
\end{equation}
Multiplying upon $\d_h^3 u$ the last term becomes dissipative:
\[
\tau_z \rho\,  \d_z \d_h^3  u  \, \d_h^3  u  \leq - \frac12  \underline{\rho}\, |\d_z \d_h^3  u |^2.
\]
Dividing by $|h|^{4+2 \g}$ and using \eqref{e:diss} we obtain
\begin{equation}\label{e:Duh}
\frac{1}{2|h|^{4+2 \g}}  \underline{\rho} D_\a   \d_h^3  u (x) \geq \frac{ R^8}{ |h|^{\a(1-\g)}}.
\end{equation}
 At this point it is clear that the transport term estimated in \eqref{e:B} is completely absorbed into dissipation at the critical time $t^*$:
\[
\p_t [u]_{2+\g}^2 \leq   -  \frac{R^7}{ |h|^{\a(1-\g)}} + \frac{\d_h^3 u(x)}{|h|^{4+2\g}} II.
\]
Here $II$ contains all the remaining three terms of $I$:
\[
II =  \int_{\R^n}[ \d_h^3 \tau_z \rho\, \tau_{3h} \d_z u + 3 \d_h^2 \tau_z \rho\, \tau_{2h} \d_h \d_z u+ 3 \d_h \tau_z \rho\, \tau_h \d_h^2 \d_z u ]  \frac{\dz}{|z|^{n+\a}} = II_1+3 II_2+3 II_3.
\]
Let us now turn to estimates on each of the remaining $II_i$ terms.  Specifically, we will be aiming to obtain bounds of the form 
\begin{equation}\label{e:aim}
	\frac{1}{|h|^{2+\g}} |II_i| \lesssim \frac{ |h|^\e }{ |h|^{\a(1-\g)}}
\end{equation}
for some $\e>0$ provided $\g$ is sufficiently small. This consequently makes the dissipation term absorb all the remaining terms in the equation.

We start with $II_2$. For $\a<1$, we use \eqref{e:u12} and \eqref{e:r1a} to obtain
\[
\begin{split}
|\d_h^2 \tau_z \rho | & \leq [\rho]_{1+\a} |h|^{1+\a}  \lesssim R|h|^{1+\a}\\
|\tau_{2h} \d_h \d_z u |& \leq [u]_2 |h| \min\{|z|,1\} \lesssim R^{-1}  |h| \min\{|z|,1\}.
\end{split}
\]
Thus, the singularity is removed and we obtain 
\[
\frac{1}{|h|^{2+\g}}|II_2| \lesssim |h|^{\a - \g},
\]
which clearly implies \eqref{e:aim} for sufficiently small $\g$.  In the case $\a \geq 1$ we first symmetrize
\[
II_2 = \frac12 \int_{\R^n}[ \d_h^2 (\tau_z - \tau_{-z})\rho\, \tau_{2h} \d_h \d_z u+\d_h^2 \tau_z \rho\, \tau_{2h} \d_h (\d_z + \d_{-z}) u ]  \frac{\dz}{|z|^{n+\a}}.
\]
For the first part we use \eqref{e:rR}:
\[
\begin{split}
|\d_h^2 (\tau_z - \tau_{-z})\rho | & \leq R \min\{|h|^{1+\a-\g}, |h|^{\a-\g} |z|\}  \\
|\tau_{2h} \d_h \d_z u |& \leq R^{-1}  |h| \min\{|z|,1\}.
\end{split}
\]
Hence,
\[
\frac{1}{|h|^{2+\g}} \int_{\R^n} | \d_h^2 (\tau_z - \tau_{-z})\rho\, \tau_{2h} \d_h \d_z u |   \frac{\dz}{|z|^{n+\a}} \leq  \frac{|h|^{1+\a - \g}}{|h|^{2+\g}}  \leq  \frac{ |h|^{\a -2 \g -1 + \a(1-\g)} }{ |h|^{\a(1-\g)}}.
\]
Clearly, $\a - 2 \g -1 + \a(1-\g)>0$.  Lastly, using that $(\d_z + \d_{-z}) u $ is the second difference,
\[
| \d_h^2 \tau_z \rho\, \tau_{2h} \d_h (\d_z + \d_{-z}) u | \leq |h|^{2-\g} \min\{ |z|^2,1\}
\]
we obtain
\[
\frac{1}{|h|^{2+\g}} \int_{\R^n} | \d_h^2 \tau_z \rho\, \tau_{2h} \d_h (\d_z + \d_{-z}) u |  \frac{\dz}{|z|^{n+\a}} \leq \frac{|h|^{2-\g}}{|h|^{2+\g}} \leq  \frac{h^{\a(1-\g) - 2\g }}{|h|^{\a(1-\g)}}.
\]
This completes the bounds on $II_2$. 

As to $II_3$ we proceed similarly.  For $\a<1$, we use
\[
|\d_h \tau_z \rho\, \tau_h \d_h^2 \d_z u | \leq |h|^2 \min\{|z|,1\}.
\]
Hence,
\[
\frac{1}{|h|^{2+\g}} |II_3| \lesssim \frac{|h|^2}{|h|^{2+\g}}  \leq  \frac{h^{\a(1-\g) - \g }}{|h|^{\a(1-\g)}}.
\]
The power in the numerator is positive for sufficiently small $\g$.  For $\a\geq 1$,  we again symmetrize first
\[
II_3 = \frac12 \int_{\R^n}[ \d_h (\tau_z - \tau_{-z})\rho\, \tau_{h} \d^2_h \d_z u+\d_h \tau_z \rho\, \tau_{h} \d^2_h (\d_z + \d_{-z}) u ]  \frac{\dz}{|z|^{n+\a}}.
\]
Thus,
\[
\begin{split}
|\d_h (\tau_z - \tau_{-z})\rho\, \tau_{h} \d^2_h \d_z u |  & \leq \min\{|h|^{3+\a - \g}, |h|^{1+ \a} |z|^2 \} \\
| \d_h \tau_z \rho\, \tau_{h} \d^2_h (\d_z + \d_{-z}) u | & \leq \min\{|h|^3, |h| |z|^2\}
\end{split}
\]
The first term results in an estimate as before. For the second we split the integration into regions $|z|<r$ and $|z|>r$ to obtain the bound by $|h| r^{2-\a} + |h|^3 r^{-\a}$. Setting $r = |h|$ leads to a further bound by $|h|^{3-\a}$ which implies the desired \eqref{e:aim}.

Estimates on $II_1$ are somewhat more involved. For the case $\a \geq 1$ we symmetrize:
\[
 II_1 = \frac12 \int_{\R^n}[ \d_h^3 ( \tau_z \rho - \tau_{-z}\rho) \, \tau_{3h} \d_z u +  \d_h^3 \tau_z \rho\, \tau_{3h} (\d_z u + \d_{-z} u) ]  \frac{\dz}{|z|^{n+\a}}.
\]
For the first half we use 
\[
| \d_h^3 ( \tau_z \rho - \tau_{-z}\rho) \, \tau_{3h} \d_z u | \leq |h|^{\a-\g} \min\{|z|^2, |h|\}.
\]
For the second half we use
\[
| \d_h^3 \tau_z \rho\, \tau_{3h} (\d_z u + \d_{-z} u) | \leq |h|^{1+\a-\g} \min\{|z|^2,1\},
\]
so, this is esimated as before. One can see that in fact the estimates above extend to the range $\a>\frac12$, but not all the way to zero. The problem is that the density takes all the variations in $h$ and not fully uses them, while $u$ cannot directly contribute. So, we will swap one $h$-difference back onto $u$.   We start from the original formula
\[
II_1 =   \int_{\R^n}\d_h^3 \tau_z \rho(x) \, \tau_{3h} \d_z u (x) \frac{\dz}{|z|^{n+\a}}.
\]
Over the region $|z|<10 |h|$ we estimate directly using the same cut-off function $\psi$ as earlier:
\[
 \int_{\R^n} | \d_h^3 \tau_z \rho(x) \, \tau_{3h} \d_z u (x) | \, \psi\left(\frac{z}{10|h|}\right) \frac{ \dz}{|z|^{n+\a}} \leq \int_{|z|<10 |h| } |h|^{1+\a} \frac{\dz}{|z|^{n+\a-1}} \lesssim |h|^2,
 \]
this results in \eqref{e:aim}.  For the remaining part, let us denote for clarity $f = \d_h^2  \rho$. So, $\d_h^3 \tau_z \rho(x) = f(x+h+z) - f(x+z)$.  We write
 \[
 \begin{split}
 & \int_{\R^n}  (f(x+h+z) - f(x+z)) \, \tau_{3h} \d_z u (x) |\frac{(1-\psi(\frac{z}{10|h|})) \dz}{|z|^{n+\a}}  \\
 &=  \int_{\R^n} f(x+z)  \left( \tau_{3h} \d_{z-h} u (x)  \frac{(1-\psi(\frac{z-h}{10|h|}))}{|z - h|^{n+\a}}  - \tau_{3h} \d_{z} u (x)  \frac{(1-\psi(\frac{z}{10|h|}))}{|z |^{n+\a}} \right) \dz \\
 & = \int_{\R^n} f(x+z)  \tau_{3h} (\d_{z-h} u (x) - \d_{z} u (x)) \frac{(1-\psi(\frac{z-h}{10|h|}))}{|z - h|^{n+\a}}  \dz     \\
 & -  \int_{\R^n} f(x+z) \tau_{3h} \d_{z} u (x) \left( \frac{(1-\psi(\frac{z-h}{10|h|}))}{|z - h|^{n+\a}}   -   \frac{(1-\psi(\frac{z}{10|h|}))}{|z |^{n+\a}}  \right) \dz 
 \end{split}
 \]
Note that the integrals are still supported on $|z| > 9|h|$, where $|z - h| \sim |z|$.  Estimating the first part we use 
\[
\begin{split}
|\d_{z-h} u (x) - \d_{z} u (x)| &=| u(x+z - h) - u(x+z)| \leq |h| \\
|f(x+z)| & \leq |h|^{1+\a}.
\end{split}
\]
thus,
\[
\left| \int_{\R^n} f(x+z)  \tau_{3h} (\d_{z-h} u (x) - \d_{z} u (x)) \frac{(1-\psi(\frac{z-h}{10|h|}))}{|z - h|^{n+\a}}  \dz   \right| \leq |h|^{2+\a} \int_{|z|\geq |h|} \frac{\dz}{|z |^{n+\a}} \leq |h|^2,
\]
which implies \eqref{e:aim}.  Finally, for the second part we use
\[
\left| \frac{(1-\psi(\frac{z-h}{10|h|}))}{|z - h|^{n+\a}}   -   \frac{(1-\psi(\frac{z}{10|h|}))}{|z |^{n+\a}}  \right| \leq |h| \frac{I_{|z|>9|h|}}{|z - \th h |^{n+\a+1}}  \lesssim   \frac{  |h| I_{|z|>9|h|}}{|z  |^{n+\a+1}} ,
\]
and 
\[
| f(x+z) \tau_{3h} \d_{z} u (x) | \leq |h|^{1+\a} |z|.
\]
Integration produces the same estimate as for the first part.

 We have established that $\p_t [u(t^*) ]^2_{2+\g} <0$ at the critical time, which finishes the proof.
 
 \medskip

\noindent \textsc{Flocking}.  We have constructed solutions which enjoy the global bounds \eqref{e:u12} and \eqref{e:rR}, which in turn implies $|\n \rho|_\infty < C R$. 
Arguing as in \cite{ST2},  we denote $\widetilde{\rho}(x,t) := \rho(x+ t \bar{u},t)$:
 \[
\p_t \widetilde{\rho} = - (u - \bar{u}) \cdot \n \widetilde{\rho} - d \widetilde{\rho} ,
\]
where all the $u$'s are evaluated at $x+ t \bar{u}$. According to the established bounds, the right hand side is exponentially decaying quantity in  $L^\infty$:
\[ 
|(u - \bar{u}) \cdot \n \widetilde{\rho} + d \widetilde{\rho}|_\infty
\leq C e^{-\d t}.
\]
Hence, $\widetilde{\rho}(t) $ is Cauchy as $t \ra \infty$, and hence there exists a unique limiting state, $\rho_\infty(x)$, such that
\[
| \widetilde{\rho} (\cdot,t) - \rho_\infty(\cdot)|_\infty < C_1 e^{-\d t}.
\]
Shifting back to labels $x$, $\bar{\rho}(x,t)=\rho_\infty(x-t\bar{u})$, we have
\[
| \rho(\cdot,t ) - \bar{\rho}(\cdot,t)|_\infty < C_1 e^{-\d t}.
\]
We also have  $\bar{\rho} \in W^{1+\a - \g,\infty}$ by compactness. Using again \eqref{e:rR} and by interpolation we have convergence in the $W^{1,\infty}$-metric as well:
\[
[ \rho(\cdot,t ) - \bar{\rho}(\cdot,t)]_1 < C_2 e^{-\d t}.
\]

\medskip

\noindent \textsc{Stability}. The computation above shows that in fact the limiting flock $r_\infty$ differs little from initial density $r_0$ under the conditions of \thm{t:stab}. Indeed, setting $R$ such that $\e = 1/R^N$ (here $\e>0$ is small), we obtain via \eqref{e:u12},
\[
| \p_t \tilde{r} |_\infty \leq C R^{-2} e^{-c_0 t /R}.
\]
Hence, $| r_\infty - r_0 |_\infty \leq \frac{C}{c_0 R} = \e^\th$. Since $|r_0 - \rho_\infty|<\e$, this finishes the result.


\end{document}